\DeclareMathOperator*{\diag}{diag}
\newtheorem{proposition}{Proposition}
\newtheorem{lemma}{Lemma}
\newtheorem{theorem}{Theorem}
\theoremstyle{definition}
\newtheorem*{problem*}{Problem}
\newtheorem*{remark*}{Remark}
\newtheorem{assum}{Assumption}
\newcommand{\ba}{\begin{align}}
\newcommand{\ea}{\end{align}}
\newcommand{\ep}{\varepsilon}
\newcommand{\bbR}{\mathbb{R}}
\title{\LARGE \bf 
Safe Stabilization of the Stefan Problem with a High-Order Moving Boundary Dynamics by PDE Backstepping
} 
\author{
Shumon Koga, Miroslav Krstic
\thanks{S. Koga is with the Department
of Computer Science and Systems Engineering at Kobe University,
Hyogo, Japan (e-mail: koga@harbor.kobe-u.ac.jp).} 
\thanks{M. Krstic is with the Department
of Mechanical and Aerospace Engineering, University of California at San Diego, La Jolla,
CA, 92093-0411 USA (e-mail: krstic@ucsd.edu).}
\thanks{This work was partially supported by JSPS KAKENHI Grant Number JP25058336.}}
\begin{document}
\maketitle
\begin{abstract}
This paper presents a safe stabilization of the Stefan PDE model with a moving boundary governed by a high-order dynamics. We consider a parabolic PDE with a time-varying domain governed by a second-order response with respect to the Neumann boundary value of the PDE state at the moving boundary. The objective is to design a boundary heat flux control to stabilize the moving boundary at a desired setpoint, with satisfying the required conditions of the model on PDE state and the moving boundary. We apply a PDE backstepping method for the control design with considering a constraint on the control law. The PDE and moving boundary constraints are shown to be satisfied by applying the maximum principle for parabolic PDEs. Then the closed-loop system is shown to be globally exponentially stable by performing Lyapunov analysis. The proposed control is implemented in numerical simulation, which illustrates the desired performance in safety and stability. An outline of the extension to third-order moving boundary dynamics is also presented. Code is released at \url{https://github.com/shumon0423/HighOrderStefan_CDC2025.git}. 
\end{abstract}

\section{Introduction}
\label{sec: intro}

The Stefan problem is a well-studied model representing the dynamics of thermal phase change, which has been widely utilized in applications including sea-ice melting \cite{koga2019arctic}, cryosurgery \cite{Rabin1998},  cell thawing \cite{srisuma2023thermal}, additive manufacturing \cite{wang2021closed}, electrosurgery probe-tissue \cite{el2021pde}. The Stefan problem comprises a parabolic Partial Differential Equation (PDE) of the temperature state defined on a moving boundary of the liquid-solid interface governed by an Ordinary Differential Equation (ODE). Such Stefan-type moving boundary models have been proposed in chemical, biological, and sociological fields such as lithium-ion batteries \cite{pozzato2024accelerating},  neuron growth \cite{demir2024neuron}, and epidemic model \cite{zhuang2021spatial}. 

While the Stefan problem has been studied for more than a century, due to the challenge of handling the nonlinearity arising in the moving boundary dynamics, there have been ongoing work studying the solution of the Stefan problem by the state-of-art methods such as Bayesian optimization \cite{winter2023multi} and physics-informed machine learning \cite{wang2021deep}. Designing a boundary feedback control of the Stefan problem has also been proposed with several approaches such as on-off switching \cite{Hoffman82}, optimal control \cite{Hinze07}, motion planning by series expansion \cite{dunbar2003boundary}, geometric control \cite{maidi2014}, enthalpy-based control \cite{petrus12}, trajectory tracking \cite{ecklebe2021toward}. Recently, a backstepping-based boundary control has been developed for the Stefan problem in \cite{Shumon19journal}, which achieves the global stabilization of closed-loop system. Owing to the systematic property of the PDE backstepping method \cite{PDEbook}, several extensions of the model and design have been developed, as summarized in \cite{KKbook2021}. 

A key approach done in \cite{Shumon19journal} is guaranteeing conditions to maintain the validity of the physical model of the liquid and solid phases, by applying the maximum principle of parabolic PDEs \cite{pao2012nonlinear}. Such a condition to ensure for the system's requirement has been recently categorized as a notion of safety together with the Control Barrier Function (CBF) \cite{ames2016control}. For systems with a high relative degree, several approaches have been proposed to address the safety such as exponential CBF \cite{nguyen2016exponential}, high-order CBF \cite{xiao2021high}, and nonovershooting \cite{krstic2006nonovershooting}. 

Applying CBF-based methods to PDE systems has been initially proposed for the Stefan problem \cite{koga2023safe}, by adding actuator dynamics and developing a nonovershooting control and exponential CBF-Quadratic Programming (QP) safety filter, where the maximum principle ensures PDE safety condition once the condition on the boundary value is satisfied by CBF on actuator dynamics. Digital implementation by event-triggered mechanism of \cite{koga2023safe} has been achieved in \cite{koga2023event}. Developing an adaptive safe control for hyperbolic PDE with actuator dynamics has been done in \cite{wang2024safe}. However, none has studied the CBF of the moving boundary of a high-relative degree. 

Without safety requirement, a local stabilization of PDE with a moving boundary has been tacked in literature. \cite{buisson2018control} has proposed a stabilization of the piston position represented as a moving boundary governed by a second-order dynamics of two first-order hyperbolic PDE. In \cite{yu2020bilateral}, the authors have developed a boundary control of traffic congestion model with a moving shock shockwave described as a moving boundary governed by a first-order dynamics together with a first order hyperbolic PDE. For parabolic PDEs, \cite{demir2024neuron} have tackled the stabilization of the neuron growth model given by a parabolic PDE with a moving boundary governed by a second-order dynamics. 

This paper is the first studying the safe stabilization of a high-order moving boundary dynamics governed together with the Stefan PDE. The contributions are (i) formulating a second-order moving boundary dynamics and showing a condition to ensure the constraint on moving boundary, (ii) developing a control law to stabilize the moving boundary with proofs of satisfying the given constraints and the global exponential stability by PDE backstepping and Lyapunov analysis, (iii) providing an outline of the extension to higher-order moving boundary dynamics. 

Section \ref{sec:problem} presents the Stefan PDE model with a second-order moving boundary dynamics and show a lemma to guarantee the constraint on the moving boundary. Section \ref{sec:control} provides the derivation of the control law to stabilize the moving boundary with a proof of control constraint. Section \ref{sec:stability} gives our main theorem with showing the proof of stability. Section \ref{sec:simulation} provides the simulation result of the proposed control design. Section \ref{sec:highorder} remarks an outline of the extension to higher-order moving boundary dynamics. Section \ref{sec:conclusion} gives the conclusion and future work. 

\newcommand{\intst}{\int_0^{s(t)}}

\section{Problem} \label{sec:problem}

\begin{figure}[t]
\centering
\includegraphics[width=0.99\linewidth]{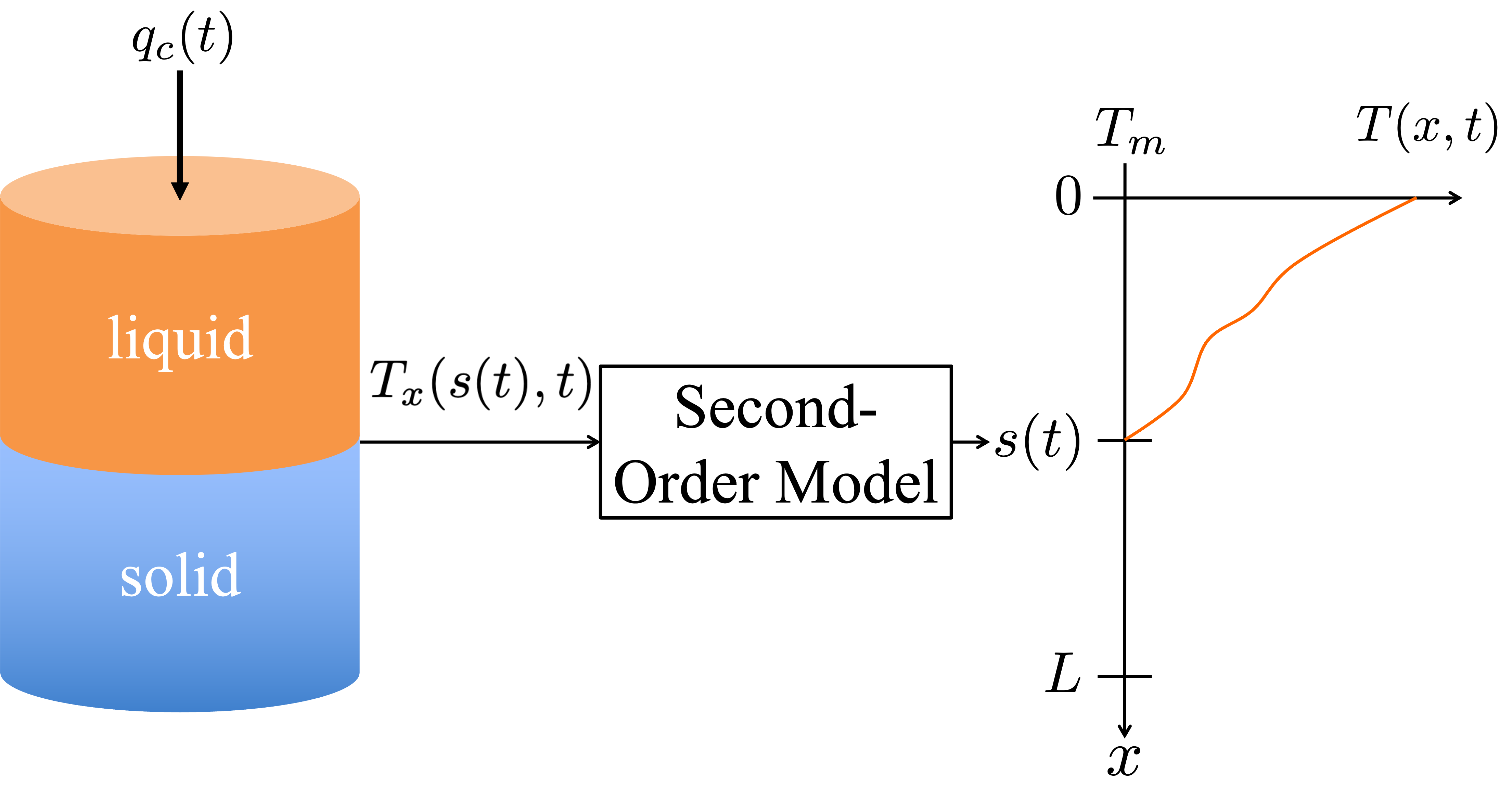}\\
\caption{Schematic of Stefan problem with a second-order moving boundary dynamics.}
\label{fig:stefan}
\end{figure}

Consider the melting in a material of length $L$ in one dimension which contains the liquid phase on the domain $[0, s(t)]$ and the solid phase on the domain $[s(t), L]$ (see  Fig.~\ref{fig:stefan}). 
The energy conservation and heat conduction laws yield 
the heat equation of  the temperature $T(x,t)$ in the liquid phase, the boundary conditions, the dynamics of the moving boundary, and the initial values as follows:

\begin{align}\label{eq:stefanPDE}
T_t(x,t)&=\alpha T_{xx}(x,t), \hspace{2mm}  \textrm{for} \hspace{2mm} t > 0, \hspace{2mm} 0< x< s(t), \\ 
\label{eq:stefancontrol}
-k T_x(0,t)&=q_{\rm c}(t),  \hspace{2mm} \textrm{for} \hspace{2mm} t >0,\\ \label{eq:stefanBC}
T(s(t),t)&= T_{\rm m}, \hspace{2mm} \textrm{for} \hspace{2mm} t >0, \\
\label{eq:stefanIC}
 s(0) &=  s_0, \textrm{and } T(x,0) = T_0(x),  \textrm{for } x \in (0, s_0]. 
\end{align}

The classical one-phase Stefan problem yields the dynamics of the moving boundary as a first-order response (actually an integrator response) with respect to the heat flux at the moving boundary.
In this paper, we consider the second-order response represented by 
\begin{align}
\label{eq:stefanODE}
\ep \ddot s(t) & = - \dot s(t) - \beta T_x(s(t),t) , \\
\dot s(0) & = v_0, 
\end{align}
where $\ep$ denotes the relaxation time 
in phase transformation and $\ep \ddot s(t)$ represents the inertia of thermal process. 

There are two requirements for the validity of the model \eqref{eq:stefanPDE}-\eqref{eq:stefanODE}:
\begin{align}\label{temp-valid}
T(x,t) \geq& T_{{\rm m}}, \quad  \forall x\in(0,s(t)), \quad \forall t>0, \\
\label{int-valid}0 < s(t)<  &L, \quad \forall t>0. 
\end{align}
First, the trivial: the liquid phase is not frozen, i.e., the liquid temperature $T(x,t)$ is greater than the melting temperature $T_{\rm m}$. Second, equally trivially, the material is not entirely in one phase, i.e., the interface remains inside the material's domain. These physical conditions are also required for the existence and uniqueness of solutions.
Hence, we assume the following for the initial data. 

\begin{assum}\label{ass:initial} 
$0 < s_0 < L$, $T_0(x) \in C^1([0, s_0];[T_{\rm m}, +\infty))$ with $T_0(s_0) = T_{\rm m}$.
 \end{assum}
 \begin{assum}\label{ass:CBFinitial}
     $v_0 \geq 0$. 
 \end{assum}

 Assumption \ref{ass:initial} is a classical one noted in most of the literature about the Stefan problem, while Assumption \ref{ass:CBFinitial} is a condition we introduce in this paper to handle the second-order moving boundary dynamics. 

We note the following lemma to ensure the well-posedness of the solution satisfying the requirements. 
 \begin{lemma}\label{lem1}
With Assumptions \ref{ass:initial}-\ref{ass:CBFinitial}, if $q_{\rm c}(t)$ is a bounded piecewise continuous non-negative heat function, i.e.,
\begin{align} \label{eq:qct-valid}
q_{{\rm c}}(t) \geq 0,  \quad \forall t\geq 0,  
\end{align}
then there exists a unique classical solution for the Stefan problem \eqref{eq:stefanPDE}--\eqref{eq:stefanODE}, which satisfies \eqref{temp-valid}, and 
\begin{align} \label{eq:sdot-pos} 
    \dot s(t) \geq 0, \quad \forall t \geq 0. 
\end{align}

\end{lemma}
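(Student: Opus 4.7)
The plan is to establish the three conclusions sequentially: cite classical theory for local-in-time existence and uniqueness of the coupled PDE--ODE system, then use the two qualitative bounds (\ref{temp-valid}) and (\ref{eq:sdot-pos}) on the existence interval to preclude finite-time breakdown and extend the solution globally. The main obstacle is the circular coupling: the maximum-principle bound $T \geq T_{\rm m}$ is formulated on the moving domain $[0, s(t)]$, while the sign of $\dot s$ depends on $T_x(s(t),t)$, so the two properties must be carried forward simultaneously along the existence interval rather than proven independently.

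For the temperature bound (\ref{temp-valid}), I would introduce the shifted state $u(x,t) = T(x,t) - T_{\rm m}$, which satisfies the same heat equation on $(0, s(t))$ with non-negative initial datum $u_0 \geq 0$ by Assumption \ref{ass:initial}, interface condition $u(s(t),t) = 0$, and flux condition $u_x(0,t) = -q_{\rm c}(t)/k \leq 0$ from (\ref{eq:qct-valid}). The weak minimum principle on the moving parabolic domain forces any negative minimum of $u$ to occur on the parabolic boundary $\{t=0\} \cup \{x=0\} \cup \{x=s(t)\}$; the initial face and the interface both yield $u \geq 0$ immediately, and a negative minimum on the left boundary $x = 0$ is ruled out by Hopf's lemma, which would force the inward normal derivative $u_x(0,t^*) > 0$, contradicting $u_x(0,t) \leq 0$. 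Hence $T \geq T_{\rm m}$ on the existence interval, and since $T(s(t),t) = T_{\rm m}$ is the minimum on the segment, $T_x(s(t),t) \leq 0$ for all such $t$.

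For the non-negative velocity (\ref{eq:sdot-pos}), I would treat (\ref{eq:stefanODE}) as a linear first-order ODE in $\dot s$ with integrating factor $e^{t/\ep}$, yielding
\[
\dot s(t) = e^{-t/\ep} v_0 \; - \; \frac{\beta}{\ep} \int_0^t e^{-(t-\tau)/\ep} T_x(s(\tau), \tau) \, d\tau.
\]
Both terms are non-negative: the first by Assumption \ref{ass:CBFinitial} and the second by the sign of $T_x$ at the interface established in the previous step (with $\beta > 0$). This gives (\ref{eq:sdot-pos}), which in turn guarantees $s(t) \geq s_0 > 0$ on the existence interval, preventing the moving domain from degenerating. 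Combined with the boundedness of $q_{\rm c}$ and the resulting a priori bounds on $T$ and $\dot s$, the local solution extends for all $t \geq 0$, closing the bootstrap and completing the proof.
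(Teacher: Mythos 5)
Your proposal is correct and follows essentially the same route as the paper: the maximum principle and Hopf's lemma on the shifted state give $T \geq T_{\rm m}$ and $T_x(s(t),t)\le 0$, and the sign of $\dot s$ then follows from the second-order interface ODE (you integrate it explicitly by variation of constants, while the paper applies Gronwall's inequality to $\ep\ddot s \ge -\dot s$ to get $\dot s(t)\ge v_0 e^{-t/\ep}$ --- the same estimate). Your additional care with the local-existence/continuation bootstrap is a detail the paper delegates to the cited reference rather than a different method.
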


\begin{proof}
    The definition of the classical solution of the Stefan problem is given in 
Appendix A of \cite{Shumon19journal}. We apply a similar approach to the proof of the well-posedness of the classical Stefan problem with first-order moving boundary dynamics. By applying the maximum principle for parabolic PDEs and Hopf's lemma, it holds that $T_{x}(s(t),t) \leq 0$, which leads to 
\begin{align}
    \ep \ddot s(t) \geq - \dot s(t). 
\end{align}
Applying the Gronwall's inequality yields $
    \dot s(t) \geq v_0 e^{ - \ep^{-1} t}$. 
Thus, with Assumption \ref{ass:CBFinitial}, \eqref{eq:sdot-pos} holds. 

\end{proof}

The objective of the paper is to develop a boundary heat flux control law $q_{\rm c}(t)$ to achieve the regulation of the moving boundary $s(t)$ at a desired setpoint $s_{\rm r}$. The derivation of the control law is given in the next section.

\section{Control Derivation} \label{sec:control}

This section provides the derivation of the control law associated with the PDE backstepping method. 
\subsection{Backstepping Transformation}
Let $u(x,t) \in \bbR_+$ and $X(t) \in \bbR^2$ be the reference error states defined by
\begin{align} \label{eq:uX-def}
    u(x,t) &= T(x,t) - T_{\rm m}, \quad 
    X = 
    \left[
    \begin{array}{cc}
    s(t) - s_{\rm r} \\
     \dot s(t) 
    \end{array}
    \right]
\end{align}
Rewriting the system \eqref{eq:stefanPDE}--\eqref{eq:stefanODE} with respect to the reference error states \eqref{eq:uX-def} yields the reference error system as 
\begin{align} \label{eq:u-PDE}
    u_t &= \alpha u_{xx} , \\
\label{eq:u-BC1}  u_x(0,t) & = - k^{-1} q_{\rm c}(t) , \\
  \label{eq:u-BC2}  u(s(t),t) & = 0, \\
 \label{eq:u-ODE} \dot X(t) & = AX(t) + B u_x(s(t),t),  
\end{align}
where 
\begin{align}
    A = \left [
    \begin{array}{cc}
       0  & 1 \\
       0  & - \ep^{-1}
    \end{array}
    \right] , \quad 
    B =\left [
    \begin{array}{c}
       0  \\
       - \ep^{-1} \beta
    \end{array}
    \right]  . 
\end{align}
We introduce the backstepping transformation given by 
\begin{align}
    w(x,t) & = u(x,t) -  \int_x^{s(t)} k(x - y) u(y,t) dy \notag\\
    & \quad - \phi^\top (x-s(t)) X(t) \label{eq:bkst}
\end{align}
where $k(x)$ and $\phi(x)$ are the gain kernel functions to be determined. Throughout the transformation \eqref{eq:bkst}, the target system of $(w,X)$ states is chosen to govern the following dynamics 
\begin{align} 
    w_t &= \alpha w_{xx} + C^\top(x-s(t)) X(t) \notag\\
    & \quad + \dot s(t) \phi'(x-s(t)) X(t), \label{eq:w-PDE} \\
  w_x(0,t) & = 0 , \label{eq:w-BC1} \\
    w(s(t),t) & = D^\top X(t), \label{eq:w-BC2}\\
    \dot X(t) & = (A + BK) X(t) + B w_x(s(t),t). \label{eq:w-ODE}
\end{align} 
where $C(x) \in \bbR^2$ and $D \in \bbR^2$ are to be determined, and $K \in \bbR^2$ is chosen to make $A + BK$ a Hurwiz matrix. 
Taking the spatial and time derivatives of \eqref{eq:bkst} along with the system \eqref{eq:u-PDE}--\eqref{eq:u-ODE}, to satisfy \eqref{eq:w-PDE}, we can derive the following conditions for the gain kernel functions: 
\begin{align} \label{eq:k-cond}
    k(x) &= - \frac{1}{\alpha} \phi(x)^\top B, \\
    \alpha \phi^{''\top}(x) &=  \phi^\top(x) A + C(x)^\top . \label{eq:phi-ODE}
\end{align}
Substituting $x = s(t)$ into the transformation \eqref{eq:bkst} and its spatial derivative with plugging the boundary conditions \eqref{eq:u-BC2} and \eqref{eq:u-ODE}, to satisfy \eqref{eq:w-BC2} and \eqref{eq:w-ODE}, we derive the following conditions: 
\begin{align} \label{eq:phi-cond}
    \phi(0) = - D, \quad \phi'(0) = K.   
\end{align}
The conditions \eqref{eq:k-cond}--\eqref{eq:phi-cond} provide a unique closed-form solution of the kernel functions $k(x)$ and $\phi(x)$. Substituting $x = 0$ into the spatial derivative of \eqref{eq:bkst} with using \eqref{eq:u-BC1}, \eqref{eq:w-BC1}, and \eqref{eq:k-cond} leads to the boundary control law: 
\begin{align}
    q_{\rm c}(t) & = \frac{k}{\alpha} D^\top B  u(0) + \frac{k}{\alpha}  \int_0^{s(t)} \phi'(- y)^\top B u(y,t) dy \notag\\
    & \quad - k \phi^{'\top} (-s(t)) X(t).  \label{eq:qct}
    \end{align}

\subsection{Positivity Condition of Boundary Control}
While the boundary control law \eqref{eq:qct} gives the general formulation for any given pair of vectors $(C,D)$, it is necessary to care about the validity conditions \eqref{temp-valid} and \eqref{int-valid} of the Stefan problem in the closed-loop system, which is ensured by positivity of the boundary control $q_{\rm c}(t) \geq 0$. For this reason, we choose the vectors $(C,D)$ as 
\begin{align} \label{eq:C-def}
    C(x) = - A^\top \phi(x), \quad D = 0. 
\end{align}
Then, the solution to \eqref{eq:phi-ODE}--\eqref{eq:phi-cond} is given by  
\begin{align} \label{eq:phi-sol}
    \phi(x) = K x. 
\end{align}
Let $K \in \bbR^2$ be 
\begin{align}
    K^\top = \frac{1}{\beta}
    \left[
    \begin{array}{cc}
      c_1   &  \ep c_2 
    \end{array}
    \right],  \label{eq:K-form}
\end{align}
where $(c_1, c_2)$ are gain parameters. Then, substituting the solution \eqref{eq:phi-sol} with \eqref{eq:K-form} into the boundary control law \eqref{eq:qct} yields the explicit form of the control law as 
\begin{align}
    q_{\rm c}(t)
    & = - \frac{ k c_2}{\alpha}  \int_0^{s(t)}  (T(x,t) - T_{\rm m})  dx \notag\\
    & \quad - \frac{k}{\beta} \left (c_1 (s(t) - s_{\rm r}) + c_2\ep  \dot s(t) \right).  \label{eq:qct-exp}
\end{align}
We impose the following assumption for the choice of the setpoint. 
\begin{assum} \label{ass:setpoint}
The setpoint $s_{\rm r}$ is chosen to satisfy
    \begin{align}
     s_{\rm r} > \underline{s}_{\rm r}(s_0, v_0, T_0)  , 
    \end{align}
    where 
    \begin{eqnarray}
    \underline{s}_{\rm r}(s_0, v_0, T_0) &:=& s_0 + \ep v_0 + \frac{  \beta}{ \alpha }  \int_0^{s_0}  (T_0(x) - T_{\rm m})  dx
    \nonumber\\
    &\geq & s_0 \,. \label{eq:setpoint-bound}    
    \end{eqnarray} 
\end{assum}

\begin{assum} \label{ass:gain-safe}
    The control gains are chosen to satisfy 
    \begin{align} \label{eq:gain-cond}
        c_1 \leq c_2 < \underbrace{c_1 \left(1+\frac{s_{\rm r} - \underline{s}_{\rm r}(s_0, v_0, T_0)}{ \underline{s}_{\rm r}(s_0, v_0, T_0) - s_0}\right)}_{\in(c_1,+\infty]}\,.
    \end{align}
\end{assum}

We show the following lemma to guarantee the conditions: 
\begin{proposition} \label{prop:safety}
    Let Assumptions \ref{ass:initial}--\ref{ass:gain-safe} hold. Then, the closed-loop system of \eqref{eq:stefanPDE}--\eqref{eq:stefanODE} with the control law \eqref{eq:qct-exp} 
    has a unique solution satisfying \eqref{temp-valid}, \eqref{eq:qct-valid} and 
    \begin{align} \label{eq:st-cond}
        s_0 \leq s(t) \leq s_{\rm r}, \quad \forall t \geq 0. 
    \end{align}
\end{proposition}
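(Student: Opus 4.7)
The plan is a bootstrap that repeatedly leans on Lemma~\ref{lem1}: on any interval on which I can certify $q_{\rm c}(t)\geq 0$, the lemma hands me $T(x,t)\geq T_{\rm m}$ and $\dot s(t)\geq 0$ for free, so the entire task reduces to extending such an interval to all of $[0,\infty)$ and then confining $s(t)$ to $[s_0,s_{\rm r}]$.

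First I would verify that Assumptions~\ref{ass:setpoint}--\ref{ass:gain-safe} pin down strict positivity of the control at $t=0$. Substituting the initial data into \eqref{eq:qct-exp} and regrouping around $\underline{s}_{\rm r}-s_0 = \ep v_0 + (\beta/\alpha)\int_0^{s_0}(T_0-T_{\rm m})\,dx$ gives
\begin{align*}
q_{\rm c}(0) = \frac{k}{\beta}\bigl[c_1(s_{\rm r}-s_0) - c_2(\underline{s}_{\rm r}-s_0)\bigr],
\end{align*}
and the strict upper bound in Assumption~\ref{ass:gain-safe} is precisely equivalent to $c_2(\underline{s}_{\rm r}-s_0) < c_1(s_{\rm r}-s_0)$, which delivers $q_{\rm c}(0) > 0$.

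The key computation is a scalar ODE for $q_{\rm c}$ along the closed-loop flow. Differentiating \eqref{eq:qct-exp} in $t$, using $T(s(t),t)=T_{\rm m}$, rewriting $\int_0^{s(t)} T_t\,dx = \alpha\bigl[T_x(s(t),t)+q_{\rm c}/k\bigr]$ via the heat equation and the Neumann trace $T_x(0,t)=-q_{\rm c}/k$, and invoking the second-order dynamics $\ep\ddot s=-\dot s-\beta T_x(s(t),t)$, I expect the $T_x(s(t),t)$ contributions to cancel exactly and leave
\begin{align*}
\dot q_{\rm c}(t) = -\,c_2\,q_{\rm c}(t) + \frac{k(c_2-c_1)}{\beta}\,\dot s(t).
\end{align*}
Since $c_2\geq c_1$ by Assumption~\ref{ass:gain-safe}, on any interval where $\dot s\geq 0$ this ODE has non-negative forcing, so Gronwall's inequality yields $q_{\rm c}(t)\geq e^{-c_2 t}q_{\rm c}(0)>0$. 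I would close the bootstrap by setting $t^{\ast}=\sup\{t\geq 0: q_{\rm c}\geq 0\text{ on }[0,t]\}$: continuity and $q_{\rm c}(0)>0$ give $t^{\ast}>0$; Lemma~\ref{lem1} applied on $[0,t^{\ast})$ gives $\dot s\geq 0$ there; the Gronwall bound then forces $q_{\rm c}(t^{\ast})>0$, contradicting any finite $t^{\ast}$. This establishes \eqref{temp-valid}, \eqref{eq:qct-valid}, and the lower bound $s(t)\geq s_0$ from $\dot s\geq 0$.

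For the upper bound $s(t)\leq s_{\rm r}$ I would introduce the functional $E(t):=s(t)+\ep\dot s(t)+(\beta/\alpha)\int_0^{s(t)}(T-T_{\rm m})\,dx$, which (via a short integration-by-parts) satisfies $\dot E=(\beta/k)\,q_{\rm c}$ and allows \eqref{eq:qct-exp} to be rewritten as $q_{\rm c}=-(kc_2/\beta)(E-s_{\rm r})+(k(c_2-c_1)/\beta)(s-s_{\rm r})$. If $t^{\dagger}$ were the first instant with $s(t^{\dagger})=s_{\rm r}$, then $q_{\rm c}(t^{\dagger})\geq 0$ would force $E(t^{\dagger})\leq s_{\rm r}$, but the pointwise lower bound $E\geq s = s_{\rm r}$ (from $\dot s\geq 0$ and $T\geq T_{\rm m}$) forces equality, which pins $\dot s(t^{\dagger})=0$ and $T(\cdot,t^{\dagger})\equiv T_{\rm m}$. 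Substituting back into \eqref{eq:qct-exp} gives $q_{\rm c}(t^{\dagger})=0$, contradicting the strict positivity just established. Global existence and uniqueness of the classical solution would then follow from the local theory referenced in \cite{Shumon19journal} combined with the bootstrap. The main obstacle I anticipate is the algebraic cancellation that collapses $\dot q_{\rm c}$ into a scalar first-order ODE; once that identity and the $E$-based rewriting of $q_{\rm c}$ are in hand, the rest is Gronwall plus a single contradiction argument.
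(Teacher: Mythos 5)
Your proposal is correct and follows essentially the same route as the paper: the same scalar ODE $\dot q_{\rm c}=-c_2 q_{\rm c}+\frac{k}{\beta}(c_2-c_1)\dot s$, the same Gronwall-plus-contradiction argument for $q_{\rm c}(t)>0$, and the same appeal to Lemma~\ref{lem1} for \eqref{temp-valid} and $\dot s\geq 0$. Your two embellishments are both sound but only the first adds value --- the explicit verification that Assumptions~\ref{ass:setpoint}--\ref{ass:gain-safe} give $q_{\rm c}(0)>0$ is left implicit in the paper and is genuinely needed to seed the contradiction, whereas the detour through the functional $E$ for the upper bound $s(t)\leq s_{\rm r}$ is unnecessary: since $q_{\rm c}\geq 0$, $\dot s\geq 0$, and $\int_0^{s(t)}(T-T_{\rm m})\,dx\geq 0$, the sign of each term in \eqref{eq:qct-exp} already forces $c_1(s(t)-s_{\rm r})\leq 0$ directly, which is all the paper does.
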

\begin{proof}
    Taking the time derivative of the control law \eqref{eq:qct-exp} and plugging \eqref{eq:u-PDE}--\eqref{eq:u-ODE} leads to  
\begin{align}
    \dot q_{\rm c}(t)
    & =  - c_2 q_{\rm c}(t) + \frac{k}{\beta} (c_2 - c_1 ) \dot s(t), \label{eq:qcdot}
\end{align}
We show the positivity condition \eqref{eq:qct-valid} by contradiction. Assume that there exists $t^* >0$ such that $q_{\rm c}(t) > 0$ for all $t \in [0, t^*)$ and $q_{\rm c}(t^*) = 0$. Then, it holds that $\dot s(t) >0 $ for all $t \in [0, t^*)$. Thus, with the condition \eqref{eq:gain-cond}, 
it holds that 
\begin{align}
   \dot q_{\rm c}(t) > - c_2 q_{\rm c}(t), \quad \forall t \in [0, t^*). 
\end{align}
Applying Gronwall's inequality, we have 
\begin{align}
    q_{\rm c}(t) > q_{\rm c}(0) e^{-ct}, \quad \forall t \in [0, t^*). 
\end{align}
Therefore, it also holds that $q_{\rm c}(t^*) \geq q_{\rm c}(0) e^{- c t^*} > 0$, which contradicts with $q_{\rm c}(t^*)= 0$. Hence, \eqref{eq:qct-valid} holds for all $t \geq 0$. Moreover, following Lemma \ref{lem1}, the validity conditions \eqref{temp-valid} and \eqref{eq:sdot-pos} holds for all $t \geq 0$. Applying those conditions to \eqref{eq:qct-exp} leads to \eqref{eq:st-cond}. 
\end{proof}

\section{Lyapunov Analysis} \label{sec:stability}

This section provides stability proof of the closed-loop system under the control law utilizing the safety conditions shown in previous sections. We state the following theorem. 
\begin{theorem}\label{thm:stability}
    Let Assumptions \ref{ass:initial}--\ref{ass:setpoint} hold. Consider the closed-loop system of \eqref{eq:stefanPDE}--\eqref{eq:stefanODE} with the control law \eqref{eq:qct-exp}.  Then, with the control gains satisfying 
    \begin{align}
    0<  c_1 & \leq   c_2 < 
        \begin{cases}
           c_1 + \overline c_2 (s_0, v_0, T_0, s_{\rm r}), \quad {\rm if} \quad 12 s_{\rm r}^2\leq \alpha \ep, \\
           c_1 +\overline{\overline{c}}_2(s_0, v_0, T_0, s_{\rm r}) , \quad {\rm otherwise}
        \end{cases} \label{eq:theorem-cond}
    \end{align}
    where 
     \begin{align}
          \overline c_2 (s_0, v_0, T_0, s_{\rm r}) & : = c_1\frac{s_{\rm r} - \underline{s}_{\rm r}(s_0, v_0, T_0)}{ \underline{s}_{\rm r}(s_0, v_0, T_0) - s_0}
          \\
          \overline{\overline{c}}_2(s_0, v_0, T_0, s_{\rm r}) &:= \min\left\{ \overline c_2 (s_0, v_0, T_0, s_{\rm r}), \frac{\alpha \ep c_1 +  \alpha }{12 s_{\rm r}^2 - \alpha \ep} \right\}\,,
     \end{align}
    the closed-loop system is exponentially stable in $H_1$-norm, i.e., there exist positive constants $M>0, b>0$ such that  $
    \Phi(t) \leq M \Phi(0) e^{-bt}$ 
holds for the Lyapunov function $\Phi(t) := || w||^2 + || w_x||^2 + X^\top X$. 
\end{theorem}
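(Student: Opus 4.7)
The plan is to work in the target $(w,X)$ coordinates of the backstepping transformation \eqref{eq:bkst} and to transfer the conclusion back using the two-sided equivalence $V\asymp\Phi$ that follows from boundedness of the kernels $k,\phi$ on $[0,s_{\rm r}]$. Because $\phi(x)=Kx$ and $D=0$, the target system \eqref{eq:w-PDE}--\eqref{eq:w-ODE} has homogeneous boundary data $w_x(0,t)=0$ and $w(s(t),t)=0$, a Hurwitz matrix $A+BK^\top$ for the $X$-dynamics driven only by the Neumann trace $w_x(s(t),t)$, and a PDE forcing $f(x,t)=-(x-s(t))K^\top A X(t)+\dot s(t)K^\top X(t)$ that is affine in $(x,X)$ except for the last piece, which is quadratic in $X$ since $\dot s(t)=X_2(t)$. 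I would use the Lyapunov candidate
\begin{align*}
V \;=\; \tfrac{r_1}{2}\|w\|^2 + \tfrac{r_2}{2}\|w_x\|^2 + X^\top P X,
\end{align*}
where $P\succ 0$ solves $(A+BK^\top)^\top P+P(A+BK^\top)=-Q$ for some $Q\succ 0$ and the weights $r_1,r_2>0$ are to be tuned.

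Three derivative computations are then required. Using Leibniz/Reynolds transport, integration by parts, the homogeneous boundary data and the identity $w_t(s(t),t)=-\dot s(t)w_x(s(t),t)$ (obtained by differentiating $w(s(t),t)=0$), I would derive
\begin{align*}
\dt\tfrac12\|w\|^2 &= -\alpha\|w_x\|^2 + \int_0^{s(t)} w f\,dx,\\
\dt\tfrac12\|w_x\|^2 &= -\tfrac12\dot s(t)\,w_x(s(t),t)^2 - \alpha\|w_{xx}\|^2 - \int_0^{s(t)} w_{xx} f\,dx,\\
\dt X^\top P X &= -X^\top Q X + 2X^\top P B\,w_x(s(t),t).
\end{align*}
Proposition \ref{prop:safety} provides $\dot s(t)\geq 0$, turning the Reynolds boundary term in the second line into a sink rather than an obstruction. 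The Poincar\'e/Wirtinger-type inequalities on $(0,s(t))$ with $s(t)\leq s_{\rm r}$, namely $w_x(s(t),t)^2\leq s_{\rm r}\|w_{xx}\|^2$, $\|w_x\|^2\leq\tfrac{s_{\rm r}^2}{2}\|w_{xx}\|^2$ and $\|w\|^2\leq\tfrac{4s_{\rm r}^2}{\pi^2}\|w_x\|^2$, then let me absorb the ODE-to-PDE coupling $2X^\top P B w_x(s(t),t)$ and the $x$-linear part of $f$ into the dissipative terms $-\alpha\|w_{xx}\|^2$, $-\alpha\|w_x\|^2$ and $-X^\top Q X$ via Young's inequality.

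The main obstacle is the quadratic-in-$X$ forcing $\dot s(t)K^\top X(t)$, which contributes a cubic-in-state term $\sim|X|^2\|w\|$ to $\dt\|w\|^2$ and so cannot be treated as a higher-order perturbation if global stability is to be claimed. I would exploit $|\dot s|\leq|X|$ together with the safety sign $\dot s\geq 0$ to split this cubic by a carefully weighted Young's inequality and then seek strict negative-definiteness of the residual quadratic form in $(\|w_x\|,\|w_{xx}\|,|X|)$. The inequalities enforcing that negative-definiteness are exactly the gain conditions in \eqref{eq:theorem-cond}: the threshold $12s_{\rm r}^2\leq\alpha\epsilon$ marks the regime where diffusion over $[0,s_{\rm r}]$ outweighs the inertial scale $\epsilon$, in which case the Proposition \ref{prop:safety} ceiling $c_2\leq c_1+\overline c_2$ is already enough; otherwise the sharper $\overline{\overline c}_2$ is required to close the quadratic form. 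The final bound $\dot V\leq -bV$, combined with $V\asymp\Phi$, yields $\Phi(t)\leq M\Phi(0)e^{-bt}$.
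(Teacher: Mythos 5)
Your setup matches the paper's: same target system, essentially the same weighted Lyapunov candidate $V=r_1\|w\|^2+r_2\|w_x\|^2+X^\top PX$, the same three derivative identities (which you compute correctly), and the same use of Proposition \ref{prop:safety} to get $\dot s\geq 0$ and $s(t)\leq s_{\rm r}$ before applying Poincar\'e and Young. The gap is in your treatment of the $\dot s$-proportional terms. You correctly observe that $\dot s(t)K^\top X(t)$ produces a cubic-in-state contribution (in the paper's estimate \eqref{eq:Vdot-1} it appears as $\dot s(t)\bigl(\tfrac{9}{8s_{\rm r}^3}\|w\|^2+X^\top KK^\top X\bigr)$), and that it cannot be treated as a higher-order perturbation if the claim is \emph{global}. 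But your proposed fix --- splitting the cubic by a weighted Young's inequality and demanding negative-definiteness of a ``residual quadratic form'' --- cannot work: no application of Young's inequality turns a genuinely cubic term into a quadratic one with constant coefficients, so at best you would obtain local exponential stability on a sublevel set. The paper's resolution is different and is the one essential idea you are missing: after establishing $\dot V\leq -(\text{neg.\ def.\ quadratic})+\dot s(t)\,G(t)$ with $G(t)\leq cV(t)$, it introduces the auxiliary functional $W(t)=V(t)e^{-as(t)}$. Then $\dot W=(\dot V-a\dot s V)e^{-as}$, and since $\dot s\geq 0$ the term $-a\dot sV$ dominates $\dot s\,G$ for $a\geq c$, while $0<s(t)\leq s_{\rm r}$ keeps $e^{-as(t)}$ bounded above and below, so exponential decay of $W$ transfers to $V$. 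This is the standard Stefan-backstepping device from \cite{Shumon19journal} and it is what makes the result global.

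A secondary inaccuracy: you attribute the gain condition \eqref{eq:theorem-cond} to closing the estimate on the cubic term, but in the paper that condition arises elsewhere, namely from the positive definiteness of $\Lambda$ in \eqref{eq:Lambda-def}. The matrix $S$ there is generated by the \emph{deterministic} forcing $-(x-s(t))K^\top AX(t)$ (note $K^\top A=\tfrac{1}{\beta}[0,\ c_1-c_2]$, hence $S\propto(c_1-c_2)^2$), which must be absorbed into $-X^\top QX$ and $-\alpha\|w_{xx}\|^2$; optimizing over the free parameter in $Q$ yields \eqref{eq:stability-gain-cond} and hence the two-case condition \eqref{eq:theorem-cond}. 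The $\dot s$-multiplied terms impose no gain condition at all once the $e^{-as(t)}$ trick is used. So the architecture you would need is: (i) $\Lambda\succ 0$ via the gain condition to kill the $(c_1-c_2)$-forcing, and (ii) $W=Ve^{-as(t)}$ to kill the $\dot s$-terms; your proposal supplies neither the correct source of (i) nor any workable version of (ii).
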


The proof of Theorem 1 is shown in the remainder of this section. 
With \eqref{eq:C-def}, the target system is rewritten as  
\begin{align} \label{eq:target-special-PDE}
    w_t &= \alpha w_{xx} - (x - s(t)) K^\top A X(t) \notag\\
    & \quad + \dot s(t) K^\top X(t), \\
 \label{eq:target-special-BC1} w_x(0,t) & = 0 , \\
 \label{eq:target-special-BC2}   w(s(t),t) & = 0, \\
 \label{eq:target-special-ODE}   \dot X(t) & = (A + BK) X(t) + B w_x(s(t),t). 
\end{align} 
Consider the Lyapunov function 
\begin{align}\label{eq:V1-def}
    V =&  \frac{3}{4 s_{\rm r}^2} \intst w(x,t)^2 dx  + \frac{1}{2} \intst w_x(x,t)^2 dx \notag\\
    & +  X^\top P X ,  
\end{align}
where $P \in \bbR^{2 \times 2}_+$ is a positive definite matrix satisfying 
\begin{align} \label{eq:Lyapunov}
    P(A + BK)  + (A + BK)^\top P \leq - Q, 
\end{align}
for any given positive semidefinite matrix $Q \in \bbR^{2 \times 2}_{\geq 0}$. Let $Q$ be a diagonal matrix of $Q = \diag(\lambda_1, \lambda_2)$ with $\lambda_1 \geq 0$, $\lambda_2 \geq 0$. 
Taking the time derivative of \eqref{eq:V1-def} along with \eqref{eq:target-special-PDE}--\eqref{eq:target-special-ODE} and applying Young's, Cauchy-Schwarz, and Poincare's inequalities leads to 
\begin{align}
 \dot V  &   \leq - \frac{3 \alpha}{4 s_{\rm r}^2} || w_{x}||^2
    - \frac{ \alpha}{4}  || w_{xx}||^2 - X^\top \Lambda X 
    \notag\\ & \quad + \dot s(t) \left( \frac{9 }{8 s_{\rm r}^3}||w ||^2 + X^\top K K^\top X \right) , \label{eq:Vdot-1}
\end{align}
where 
\begin{align}
    S & = \frac{(c_1 - c_2)^2}{ \beta^2  }
     \left [
    \begin{array}{cc}
       0  & 0 \\
       0  & 1
    \end{array}
    \right] ,  \label{eq:S-def} \\
    \Lambda & = \frac{ \alpha}{64 s_{\rm r} | Q^{-1/2} PB |^2} Q - \frac{9 s_{\rm r}^3}{\alpha}S . \label{eq:Lambda-def}
\end{align}
To show the stability, we need the positive definiteness of $\Lambda $. Let $d_1 := \ep^{-1} c_1$ and $d_2 := \ep^{-1} + c_2$. Then, a solution to the Lyapunov condition \eqref{eq:Lyapunov} can be given by 
\begin{align}
P = \frac{1}{2} \begin{bmatrix} 
\frac{\lambda_1}{ d_2} + \frac{d_1 \lambda_2}{ d_2} + \frac{d_2 \lambda_1}{ d_1} & \frac{\lambda_1}{ d_1} \\[10pt]
\frac{\lambda_1}{ d_1} & \frac{\lambda_1}{ d_1 d_2} + \frac{\lambda_2}{ d_2}
\end{bmatrix}    
\end{align}
Set $\lambda_2 = \kappa_2 \lambda_1$ with $\kappa_2 \geq 0$. 
Since $Q$ is a diagonal positive definite matrix and $S$ defined in \eqref{eq:S-def} has its non-zero value only in $(2,2)$ element, to show the positive definiteness of $\Lambda$ defined in \eqref{eq:Lambda-def}, it suffices to show the positivity of the $(2,2)$ element in $\Lambda$, which leads to the following inequality:  
\begin{align}
 \frac{ d_1^2 d_2^2 }{ \left( d_2^2 
 + 2 d_1 \right) \kappa_2^{-1} + \kappa_2^{-2} + d_1^2  } \geq \frac{144 s_{\rm r}^4(c_1 - c_2)^2}{\alpha^2 \ep^{2}  } \label{eq:Lambda-pos-cond}
\end{align}
Since the left hand side of \eqref{eq:Lambda-pos-cond} is monotonically increasing with respect to $\kappa_2$ with the supremum $d_2^2 = ( \ep^{-1} + c_2)^2$ as $\kappa_2$ goes to infinity, if it holds that 
\begin{align}
     ( \ep^{-1} + c_2)^2 > \frac{144 s_{\rm r}^4(c_1 - c_2)^2}{\alpha^2 \ep^{2}  }, \label{eq:stability-gain-cond}
\end{align}
then, there exists a sufficiently large $\kappa_2>0$ such that the inequality \eqref{eq:Lambda-pos-cond} holds, and hence $\Lambda$ is positive definite. One can show that \eqref{eq:stability-gain-cond} is equivalent to the gain condition \eqref{eq:theorem-cond} stated in Theorem \ref{thm:stability}. 
Once the positive definiteness of $\Lambda$ is shown, consider another Lyapunov function $W(t) = V(t) e^{ -  a s(t)}$ for some positive parameter $a>0$. Applying the same manner as in the stability proof of the Stefan problem in \cite{Shumon19journal}, 
we can show that there exist positive constants $a>0, b>0$ such that $\dot W \leq - b W$ holds,   
which ensures the exponential stability of the closed-loop system and completes the proof of Theorem \ref{thm:stability}. 
\section{Simulation} \label{sec:simulation}

\begin{figure*}[t]
\centering 
\subfloat[The interface position.]
{\includegraphics[width=2.1in]{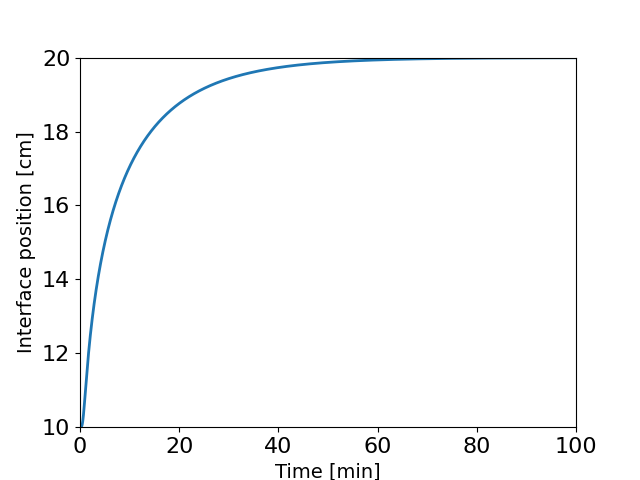}\label{fig:interface}} \hspace{1mm}
\subfloat[The boundary temperature. ]
{\includegraphics[width=2.1in]{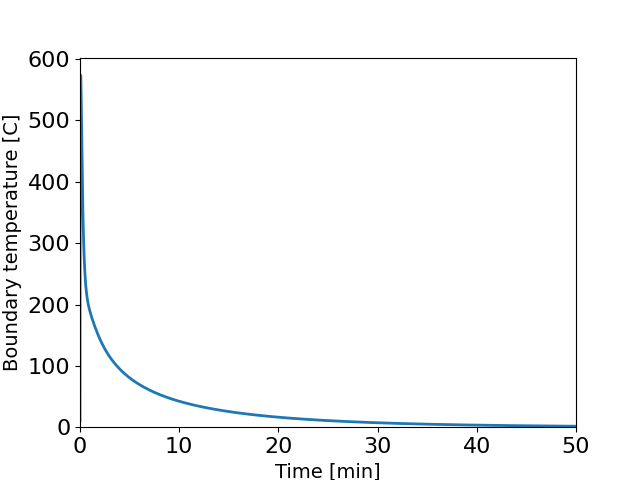}\label{fig:T0}} 
\hspace{1mm}
\subfloat[The control input of boundary heat flux.]
{\includegraphics[width=2.1in]{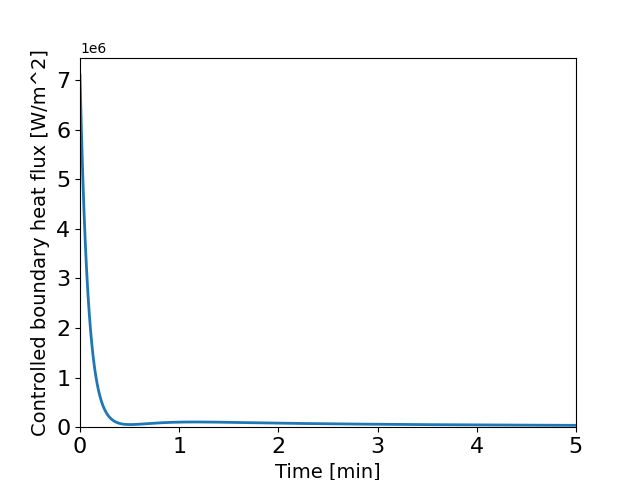}\label{fig:qc}}
\caption{ The closed-loop response of \eqref{eq:stefanPDE}--\eqref{eq:stefanODE} with the control law \eqref{eq:qct-exp}.  }
\label{fig:response}
\end{figure*}

We perform numerical simulation to investigate the effectiveness of the proposed control law \eqref{eq:qct-exp} for the Stefan model with the second-order moving boundary dynamics \eqref{eq:stefanPDE}--\eqref{eq:stefanODE}. Following \cite{Shumon19journal}, we use physical parameters of Zinc. The time constant parameter $\epsilon$ in the model \eqref{eq:stefanODE} is set to $\epsilon = 20$ [s]. The initial conditions and the setpoint are chosen as $s_0$ = 10 [cm], $T_0(x)-T_{{\mathrm m}}= \bar{T}_0(1-x/s_0)$ with $ \bar{T}_0$ = 10 [$^\circ$C], $v_0 = 0$ [m/s], and $s_{{\mathrm r}}$ = 20 [cm]. We can see that the setpoint restriction in Assumption \ref{ass:setpoint} is satisfied. The control gains are set to $c_1 = 0.1$ [1/s] and $c_2 = 0.2$ [1/s].  

The simulation result of the closed-loop response is shown in Fig. \ref{fig:response}. Fig. \ref{fig:interface} illustrates that the interface position converges to the setpoint position monotonically, while the response in the initial time period is relatively slow due to the second-order dynamics \eqref{eq:stefanODE}. Fig. \ref{fig:T0} depicts that the temperature at the boundary blows up first and starts cooling down shortly after the peak is achieved, with maintaining greater value than the melting temperature. Fig. \ref{fig:qc} shows that the boundary heat control maintains positive value, while the response is not monotonic due to the difference between the two gain values of $c_1$ and $c_2$ as seen in \eqref{eq:qcdot}. Therefore, the simulation result is consistent with the theoretical results proven in Proposition \ref{prop:safety} and Theorem \ref{thm:stability}. 
\section{Higher Order Moving Boundary Dynamics} \label{sec:highorder}

In this section, we show an outline for extending the approaches remarked in previous sections to third-order moving boundary dynamics towards generalization. Since a key condition is \eqref{eq:sdot-pos}, we set CBF as 
\begin{align} \label{eq:h1-def}
    h_1(t) = \dot s(t).  
\end{align}
Then, for third-order moving boundary dynamics, the condition of exponential CBF renders the following chain 
\begin{align} \label{eq:h1-ODE}
    \ep_1 \dot h_1(t) &= - h_1 + h_2, \\
 \label{eq:h2-ODE}  \ep_2 \dot h_2(t) &= - h_{2} - \beta T_x(s(t),t), 
\end{align}
where $\ep_1 \geq 0$, $\ep_2\geq 0$
. With this model, clearly, $\ep_1 = \ep_2 = 0$
renders the classical Stefan condition of the first-order dynamics, and either $\ep_1 = 0$ or $\ep_2 = 0$
renders the second-order dynamics shown above. Combining \eqref{eq:h1-def}--\eqref{eq:h2-ODE} gives the moving boundary dynamics
\begin{align}
   \ep_1 \ep_2 \dddot s(t) + ( \ep_1 + \ep_2) \ddot s(t) = - \dot s(t) - \beta T_x(s(t),t). \label{eq:stefan-thrid-ODE}
\end{align}
Therefore, any third-order dynamics having positive coefficients of third- and second-order terms in the left hand side with the same right hand side as \eqref{eq:stefan-thrid-ODE} can be written as a chain of high-order CBF in \eqref{eq:h1-ODE}--\eqref{eq:h2-ODE}. 


The state-space representation of \eqref{eq:stefan-thrid-ODE} by the form of \eqref{eq:u-ODE} is given by 
\begin{align}
    A & = \left [
    \begin{array}{ccc}
       0  & 1 & 0 \\
       0 & 0 & 1 \\
       0  & - \ep_1^{-1} \ep_2^{-1} &  - (\ep_1^{-1} + \ep_2^{-1})
    \end{array}
    \right] , \\
    B^\top &=\left [
    \begin{array}{ccc}
       0  &
       0 &
       - \ep_1^{-1} \ep_2^{-1} \beta
    \end{array}
    \right]  , 
\end{align}
Note that the form of \eqref{eq:qct} of backstepping control and the gain kernel \eqref{eq:phi-sol} are not changed. Thus, we set the control gain vector $K \in \bbR^3$ as 
\begin{align}
    K^\top = \frac{1}{\beta}
    \left [
    \begin{array}{ccc}
       c_1  & 
       (\ep_1 + \ep_2) c_2 & 
       \ep_1 \ep_2 c_3 
    \end{array}
    \right]  .  
\end{align}
Then, the explicit form of the control law is given by 
\begin{align}
    q_{\rm c}(t)
    & = - \frac{ k c_3}{\alpha}  \int_0^{s(t)}  (T(x,t) - T_{\rm m})  dx \notag\\
    & \quad - \frac{k}{\beta} \left (c_1 (s(t) - s_{\rm r}) + c_2 (\ep_1 + \ep_2)  \dot s(t) + c_3 \ep_1 \ep_2 \ddot s(t) \right).  \label{eq:third-qct-exp}
\end{align}
For ensuring safety conditions, we impose the following assumptions. 
\begin{assum} \label{ass:a0}
  It holds that $ a_0 := \ddot s(0) \geq - \frac{v_0}{\ep_1}$. 
\end{assum}
\begin{assum} \label{ass:third-gain12} 
The gains $(c_1, c_2)$ are chosen to satisfy
$0 < c_1 \leq c_2$. 
\end{assum}
\begin{assum} \label{ass:third-setpoint}
The setpoint $s_{\rm r}$ is chosen to satisfy 
    \begin{align}
    s_{\rm r} > \underline{s}_{\rm r}(s_0, v_0, T_0, c_1, c_2)  , \label{eq:third-setpoint}
    \end{align}
    where 
    \begin{eqnarray}
   & \underline{s}_{\rm r}(s_0, v_0, T_0, c_1, c_2) \notag\\
    &:= s_0 + \frac{c_2}{c_1} \left( \ep v_0 + \frac{  \beta}{ \alpha }  \int_0^{s_0}  (T_0(x) - T_{\rm m})  dx \right). \label{eq:third-srunder}
    \end{eqnarray} 
\end{assum}
\begin{assum} \label{ass:third-gain3}
    The gain $c_3$ is chosen to satisfy 
       \begin{align} \label{eq:third-gain3-cond}
        c_2 & \leq c_3 \leq c_2 + \min \left\{\frac{\ep_1}{\ep_2}(c_2 - c_1) , \overline{c}_3, 
        \frac{\ep_2}{\ep_1} c_2
        \right\} , \\
      \overline{c}_3 & :=   \frac{c_1 ( s_{\rm r} - \underline{s}_{\rm r}(s_0, v_0, T_0, c_1, c_2))}{ \ep_1 \ep_2 a_0 +  \frac{ \beta }{\alpha}  \int_0^{s_0}  (T_0(x) - T_{\rm m})  dx }.  
    \end{align}
\end{assum}

Analogous result to Proposition \ref{prop:safety} for ensuring safety condition in third-order model is presented as follows. 
\begin{proposition}
    Let Assumptions \ref{ass:initial}, \ref{ass:CBFinitial}, \ref{ass:a0}--\ref{ass:third-gain3} hold. Then, the closed-loop system of \eqref{eq:stefanPDE}--\eqref{eq:stefanIC} and \eqref{eq:stefan-thrid-ODE} under the control law \eqref{eq:third-qct-exp} satisfies \eqref{temp-valid}, \eqref{eq:qct-valid}, \eqref{eq:sdot-pos}, and \eqref{eq:st-cond}. 
\end{proposition}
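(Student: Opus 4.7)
The plan is to mirror the two-stage logic of Proposition~\ref{prop:safety}. First, assuming $q_{\rm c}(t)\geq 0$ tentatively, I would use the maximum principle together with the CBF chain \eqref{eq:h1-ODE}--\eqref{eq:h2-ODE} to propagate $\dot s(t)\geq 0$ and $T(x,t)\geq T_{\rm m}$. Second, I would differentiate the explicit control law \eqref{eq:third-qct-exp} along the closed-loop trajectories to obtain a scalar ODE of the form $\dot q_{\rm c}=-c_3 q_{\rm c}+(\text{source})$, show that the source is nonnegative under the gain restrictions in Assumption~\ref{ass:third-gain3}, and close the bootstrap via Gronwall. Third, I would read the bound $s_0\leq s(t)\leq s_{\rm r}$ directly off \eqref{eq:third-qct-exp}.

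For the first step, the maximum principle and Hopf's lemma (Appendix~A of \cite{Shumon19journal}) give $T(x,t)\geq T_{\rm m}$ and $T_x(s(t),t)\leq 0$ as long as $q_{\rm c}\geq 0$. Hence from \eqref{eq:h2-ODE}, $\ep_2\dot h_2\geq -h_2$, and since $h_2(0)=\ep_1 a_0+v_0\geq 0$ by Assumption~\ref{ass:a0}, Gronwall yields $h_2(t)\geq 0$. Feeding this into \eqref{eq:h1-ODE} gives $\ep_1\dot h_1\geq -h_1$, and since $h_1(0)=v_0\geq 0$ by Assumption~\ref{ass:CBFinitial}, we conclude $h_1=\dot s\geq 0$. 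This is the two-step analog of Lemma~\ref{lem1}, and it simultaneously delivers \eqref{temp-valid} and \eqref{eq:sdot-pos} conditionally on \eqref{eq:qct-valid}.

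For the second step, differentiating \eqref{eq:third-qct-exp}, using \eqref{eq:stefanPDE}--\eqref{eq:stefanBC} to evaluate $\frac{d}{dt}\int_0^{s(t)}(T-T_{\rm m})dx=\alpha T_x(s(t),t)+(\alpha/k)q_{\rm c}(t)$, and then \eqref{eq:stefan-thrid-ODE} to eliminate $\dddot s$, the highest-order term cancels exactly and leaves
\begin{align*}
\dot q_{\rm c}=-c_3 q_{\rm c}+\frac{k}{\beta}\bigl[(c_3-c_1)\dot s+(c_3-c_2)(\ep_1+\ep_2)\ddot s\bigr].
\end{align*}
Substituting $\ep_1\ddot s=h_2-h_1$ recasts the bracket as a linear combination of $h_1$ and $h_2$: the $h_1$ coefficient becomes $(c_2-c_1)-(c_3-c_2)\ep_2/\ep_1$, which is $\geq 0$ by the first bound in \eqref{eq:third-gain3-cond}, while the $h_2$ coefficient $(c_3-c_2)(\ep_1+\ep_2)/\ep_1$ is nonnegative by Assumptions~\ref{ass:third-gain12}--\ref{ass:third-gain3}. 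On any interval where $h_1,h_2\geq 0$, the source is therefore nonnegative, so $\dot q_{\rm c}\geq -c_3 q_{\rm c}$. The bound $c_3\leq c_2+\overline c_3$ together with Assumption~\ref{ass:third-setpoint} is precisely the algebraic condition guaranteeing $q_{\rm c}(0)>0$ when \eqref{eq:third-qct-exp} is evaluated at $(s_0,v_0,a_0,T_0)$. A contradiction argument identical to Proposition~\ref{prop:safety}---suppose a first zero $t^*>0$ of $q_{\rm c}$, then the first-step chain gives $h_1,h_2\geq 0$ on $[0,t^*)$, and Gronwall yields $q_{\rm c}(t)\geq q_{\rm c}(0)e^{-c_3 t}>0$---contradicts $q_{\rm c}(t^*)=0$ and establishes \eqref{eq:qct-valid} globally.

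The third step reads \eqref{eq:st-cond} off the now-valid inequalities: $s(t)\geq s_0$ is immediate from $\dot s\geq 0$, and for the upper bound, $q_{\rm c}\geq 0$ together with $T\geq T_{\rm m}$ turns \eqref{eq:third-qct-exp} into $c_1(s-s_{\rm r})+c_2(\ep_1+\ep_2)\dot s+c_3\ep_1\ep_2\ddot s\leq 0$; substituting $\ep_1\ddot s=h_2-h_1$ and invoking the remaining bound $c_3-c_2\leq(\ep_2/\ep_1)c_2$ in \eqref{eq:third-gain3-cond} makes the $h_1,h_2$ contributions carry the correct sign to force $s\leq s_{\rm r}$. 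The hardest part will be the coordinated algebraic bookkeeping in the second step: after substituting both the PDE boundary identity for the integral term and the third-order ODE to remove $\dddot s$, the cancellations and the remaining $h_1,h_2$ coefficients must line up sign-wise with the three bounds in \eqref{eq:third-gain3-cond}, each of which is sharp for exactly one sub-claim (positivity of the $\dot q_{\rm c}$ source, positivity of $q_{\rm c}(0)$, and the upper bound on $s$), leaving essentially no slack. Extending the same telescoping to $n$th-order CBF chains of the form \eqref{eq:h1-ODE}--\eqref{eq:h2-ODE} would require a uniform packaging of these gain constraints.
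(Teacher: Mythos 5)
Your proposal follows essentially the same route as the paper's proof: the same first-zero contradiction bootstrap on $q_{\rm c}$, the same cascaded Gronwall argument giving $h_2\geq 0$ then $h_1=\dot s\geq 0$ via the maximum principle, the same differentiation of \eqref{eq:third-qct-exp} (with the $\dddot s$ and $T_x(s(t),t)$ terms cancelling through \eqref{eq:stefan-thrid-ODE}) to reach $\dot q_{\rm c}\geq -c_3 q_{\rm c}$ under the three bounds in \eqref{eq:third-gain3-cond}, and the same reading of $s_0\leq s(t)\leq s_{\rm r}$ off the control law. Your write-up is in fact more explicit than the paper's about the intermediate computations, but the decomposition and the role assigned to each gain inequality match exactly.
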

\begin{proof}
Assume $q_{\rm c}(t) > 0 $ for all $t \in [0, t^*)$. Then, due to the CBF \eqref{eq:h2-ODE}, we have $h_2(t) = \ep_1 \ddot s(t) + \dot s(t) \geq 0$. Applying this inequality to the time derivative of \eqref{eq:third-qct-exp}  with the help of the left inequality in \eqref{eq:third-gain3-cond} yields
\begin{align}
    \dot q_{\rm c}(t) &\geq - c_3 q_{\rm c}(t) \notag\\
    &+ \frac{k}{\beta} \dot s(t) \left( \left(1 + \frac{\ep_2}{\ep_1} \right) c_2 - c_1  - \frac{ \ep_2}{\ep_1} c_3  \right) . \label{eq:third-qctdot-ineq}
\end{align}
Thus, with the first term in the right inequality in \eqref{eq:third-gain3-cond}, 
\eqref{eq:third-qctdot-ineq} yields $\dot q_{\rm c}(t) \geq - c_3 q_{\rm c}(t)$. Note that two satisfy both inequalities of left-hand-side and the first term in the right-hand-side of \eqref{eq:third-gain3-cond}, we need Assumption \ref{ass:third-gain12}. With Assumption \ref{ass:third-setpoint} and the second term in the right-hand side of \eqref{eq:third-gain3-cond}, it holds that $q_{\rm c}(0) > 0$. Applying this and $\dot q_{\rm c}(t) \geq - c_3 q_{\rm c}(t)$ leads to the contradiction of $q_{\rm c}(t^*) = 0$ similarly to the proof of Proposition \ref{prop:safety}. Thus, \eqref{eq:qct-valid} and \eqref{temp-valid} hold by applying Lemma \ref{lem1}, and \eqref{eq:sdot-pos} holds by exponential CBF chain in \eqref{eq:h1-def}--\eqref{eq:h2-ODE} with Assumption \ref{ass:a0}. Finally, applying \eqref{temp-valid}, \eqref{eq:qct-valid}, \eqref{eq:sdot-pos} and $h_2(t) = \ep \ddot s(t) + \dot s(t) >0$ to the control law \eqref{eq:third-qct-exp}, with the third term in the right-hand side of the inequality \eqref{eq:third-gain3-cond}, one can show \eqref{eq:st-cond} holds. 


\end{proof}

    Since all the steps from \eqref{eq:target-special-PDE} to \eqref{eq:Lambda-def} are not dependent on the order of the moving boundary dynamics, another gain condition is derived by showing the positive definiteness of $\Lambda$ defined in \eqref{eq:Lambda-def}, where $S$ is now given by 
    \begin{align}
        S &= A^\top K K^\top A = \frac{1}{\beta^2}  \left [
    \begin{array}{cc}
       0  & 0 \\
       0  & \Gamma
    \end{array}
    \right] , \\
    \Gamma &= 
     \left [
    \begin{array}{cc}
       \gamma_1^2  & \gamma_1 \gamma_2 \\
      \gamma_1 \gamma_2  & \gamma_2^2
    \end{array}
    \right] , 
    \end{align}
    and  $ \gamma_1 = c_3 - c_1$, and $\gamma_2 = (\ep_1 + \ep_2)(c_3 - c_2)$. 
    Thus, we can see that at least the case $c_1 = c_2 = c_3$ ensures the positive definiteness of $\Lambda$ defined in \eqref{eq:Lambda-def} with maintaining the gain conditions in Assumptions  \ref{ass:third-gain12}--\ref{ass:third-gain3}. Therefore, we can see that there exists a constant $\overline{\overline{c}}_3>0$ such that for any $c_3< c_2 + \overline{\overline{c}}_3$ with satisfying Assumptions  \ref{ass:third-gain12}--\ref{ass:third-gain3} the closed-loop system is globally exponentially stable as proved in Theorem \ref{thm:stability}.  

    
\section{Conclusion} \label{sec:conclusion}

This paper has presented a safe stabilizing boundary control of the Stefan PDE with a high-order moving boundary dynamics. 
We have first considered the second-order moving boundary dynamics and shown that the constraint on the moving boundary is ensured under a positivity condition of a boundary control. 
We have developed a feedback control law to stabilize the moving boundary at a desired position by PDE backstepping with showing that the control constraint is satisfied. 
Then, the global exponential stability of the closed-loop system is shown by Lyapunov analysis by imposing explicit conditions on the control gains related to the initial condition. 
An outline of the extension to the third-order moving boundary dynamics has also been presented. Future work considers QP-safety filter, delay in moving boundary dynamics, and an observer-based output feedback control.


\bibliographystyle{ieeetr}
\bibliography{ref.bib}

\begin{thebibliography}{10}

\bibitem{koga2019arctic}
S.~Koga and M.~Krstic, ``Arctic sea ice state estimation from thermodynamic
  {PDE} model,'' {\em Automatica}, vol.~112, p.~108713, 2020.

\bibitem{Rabin1998}
Y.~Rabin and A.~Shitzer, ``Numerical solution of the multidimensional freezing
  problem during cryosurgery,'' {\em Journal of biomechanical engineering},
  vol.~120, no.~1, pp.~32--37, 1998.

\bibitem{srisuma2023thermal}
P.~Srisuma, A.~Pandit, Q.~Zhang, M.~S. Hong, J.~Gamekkanda, F.~Fachin,
  N.~Moore, D.~Djordjevic, M.~Schwaerzler, T.~Oyetunde, {\em et~al.}, ``Thermal
  imaging-based state estimation of a Stefan problem with application to cell
  thawing,'' {\em Computers \& Chemical Engineering}, vol.~173, p.~108179,
  2023.

\bibitem{wang2021closed}
D.~Wang and X.~Chen, ``Closed-loop high-fidelity simulation integrating finite
  element modeling with feedback controls in additive manufacturing,'' {\em
  Journal of Dynamic Systems, Measurement, and Control}, vol.~143, no.~2,
  p.~021006, 2021.

\bibitem{el2021pde}
H.~El-Kebir and J.~Bentsman, ``{PDE}-based modeling and non-collocated feedback
  control of electrosurgical-probe/tissue interaction,'' in {\em 2021 American
  Control Conference (ACC)}, pp.~4045--4050, IEEE, 2021.

\bibitem{pozzato2024accelerating}
G.~Pozzato, X.~Li, D.~Lee, J.~Ko, and S.~Onori, ``Accelerating the transition
  to cobalt-free batteries: a hybrid model for LiFePO$_4$/graphite chemistry,''
  {\em npj Computational Materials}, vol.~10, no.~1, p.~14, 2024.

\bibitem{demir2024neuron}
C.~Demir, S.~Koga, and M.~Krstic, ``Neuron growth control and estimation by
  {PDE} backstepping,'' {\em Automatica}, vol.~165, p.~111669, 2024.

\bibitem{zhuang2021spatial}
Q.~Zhuang and J.~Wang, ``A spatial epidemic model with a moving boundary,''
  {\em Infectious Disease Modelling}, vol.~6, pp.~1046--1060, 2021.

\bibitem{winter2023multi}
J.~Winter, R.~Abaidi, J.~Kaiser, S.~Adami, and N.~Adams, ``Multi-fidelity
  bayesian optimization to solve the inverse Stefan problem,'' {\em Computer
  Methods in Applied Mechanics and Engineering}, vol.~410, p.~115946, 2023.

\bibitem{wang2021deep}
S.~Wang and P.~Perdikaris, ``Deep learning of free boundary and Stefan
  problems,'' {\em Journal of Computational Physics}, vol.~428, p.~109914,
  2021.

\bibitem{Hoffman82}
K.-H. Hoffmann and J.~Sprekels, ``Real-time control of the free boundary in a
  two-phase stefan problem,'' {\em Numerical Functional Analysis and
  Optimization}, vol.~5, no.~1, pp.~47--76, 1982.

\bibitem{Hinze07}
M.~Hinze and S.~Ziegenbalg, ``Optimal control of the free boundary in a
  two-phase Stefan problem,'' {\em Journal of Computational Physics}, vol.~223,
  no.~2, pp.~657--684, 2007.

\bibitem{dunbar2003boundary}
W.~B. Dunbar, N.~Petit, P.~Rouchon, and P.~Martin, ``Boundary control of a
  nonlinear Stefan problem,'' in {\em 42nd IEEE International Conference on
  Decision and Control (IEEE Cat. No. 03CH37475)}, vol.~2, pp.~1309--1314,
  IEEE, 2003.

\bibitem{maidi2014}
A.~Maidi and J.-P. Corriou, ``Boundary geometric control of a linear Stefan
  problem,'' {\em Journal of Process Control}, vol.~24, no.~6, pp.~939--946,
  2014.

\bibitem{petrus12}
B.~Petrus, J.~Bentsman, and B.~G. Thomas, ``Enthalpy-based feedback control
  algorithms for the Stefan problem,'' in {\em 2012 IEEE 51st IEEE Conference
  on Decision and Control (CDC)}, pp.~7037--7042, IEEE, 2012.

\bibitem{ecklebe2021toward}
S.~Ecklebe, F.~Woittennek, C.~Frank-Rotsch, N.~Dropka, and J.~Winkler, ``Toward
  model-based control of the vertical gradient freeze crystal growth process,''
  {\em IEEE Transactions on Control Systems Technology}, vol.~30, no.~1,
  pp.~384--391, 2021.

\bibitem{Shumon19journal}
S.~Koga, M.~Diagne, and M.~Krstic, ``Control and state estimation of the
  one-phase {Stefan} problem via backstepping design,'' {\em IEEE Transactions
  on Automatic Control}, vol.~64, no.~2, pp.~510--525, 2018.

\bibitem{PDEbook}
M.~Krstic and A.~Smyshlyaev, {\em Boundary control of {PDE}s: A course on
  backstepping designs}.
\newblock SIAM, 2008.

\bibitem{KKbook2021}
S.~Koga and M.~Krstic, {\em Materials Phase Change {PDE} Control and
  Estimation: From Additive Manufacturing to Polar Ice}.
\newblock Springer Nature, 2020.

\bibitem{pao2012nonlinear}
C.-V. Pao, {\em Nonlinear parabolic and elliptic equations}.
\newblock Springer Science \& Business Media, 2012.

\bibitem{ames2016control}
A.~D. Ames, X.~Xu, J.~W. Grizzle, and P.~Tabuada, ``Control barrier function
  based quadratic programs for safety critical systems,'' {\em IEEE
  Transactions on Automatic Control}, vol.~62, no.~8, pp.~3861--3876, 2016.

\bibitem{nguyen2016exponential}
Q.~Nguyen and K.~Sreenath, ``Exponential control barrier functions for
  enforcing high relative-degree safety-critical constraints,'' in {\em
  American Control Conference (ACC)}, pp.~322--328, IEEE, 2016.

\bibitem{xiao2021high}
W.~Xiao and C.~Belta, ``High-order control barrier functions,'' {\em IEEE
  Transactions on Automatic Control}, vol.~67, no.~7, pp.~3655--3662, 2021.

\bibitem{krstic2006nonovershooting}
M.~Krstic and M.~Bement, ``Nonovershooting control of strict-feedback nonlinear
  systems,'' {\em IEEE Transactions on Automatic Control}, vol.~51, no.~12,
  pp.~1938--1943, 2006.

\bibitem{koga2023safe}
S.~Koga and M.~Krstic, ``Safe {PDE} backstepping {QP} control with high
  relative degree {CBF}s: Stefan model with actuator dynamics,'' {\em IEEE
  Transactions on Automatic Control}, vol.~68, no.~12, pp.~7195--7208, 2023.

\bibitem{koga2023event}
S.~Koga, C.~Demir, and M.~Krstic, ``Event-triggered safe stabilizing boundary
  control for the Stefan {PDE} system with actuator dynamics,'' in {\em 2023
  American Control Conference (ACC)}, pp.~1794--1799, IEEE, 2023.

\bibitem{wang2024safe}
J.~Wang and M.~Krstic, ``Safe control of hyperbolic {PDE-ODE} cascades,'' in
  {\em 2024 American Control Conference (ACC)}, pp.~2533--2538, IEEE, 2024.

\bibitem{buisson2018control}
M.~Buisson-Fenet, S.~Koga, and M.~Krstic, ``Control of piston position in
  inviscid gas by bilateral boundary actuation,'' in {\em 2018 IEEE Conference
  on Decision and Control (CDC)}, pp.~5622--5627, IEEE, 2018.

\bibitem{yu2020bilateral}
H.~Yu, M.~Diagne, L.~Zhang, and M.~Krstic, ``Bilateral boundary control of
  moving shockwave in {LWR} model of congested traffic,'' {\em IEEE Transactions
  on Automatic Control}, vol.~66, no.~3, pp.~1429--1436, 2020.

\end{thebibliography}


\end{document}